\documentclass[11pt]{article}
\usepackage[T1]{fontenc}
\usepackage{hyperref, amsfonts, amsmath, amssymb, amsthm, fullpage, enumitem, tikz, bm, mathtools, microtype, mleftright}
\usepackage[noabbrev, capitalize]{cleveref}

\definecolor{litegray}{RGB}{236,236,236}
\tikzstyle{vertex}=[circle, draw, fill=white, inner sep=0pt, minimum width=4pt]

\title{Classification of equiangular lines\\with fixed angle $\arccos(1/(1+2\sqrt2))$}
\author{
    Theodore Gossett\thanks{School of Mathematical and Statistical Sciences, Arizona State University, Tempe, AZ 85281, USA. Email: {\tt \{tgossett, zilinj, zwellner\}@asu.edu}.} \and Zilin Jiang\footnotemark[1]${\ }^{,}$\thanks{School of Computing and Augmented Intelligence, Arizona State University, Tempe, AZ 85281, USA. Supported in part by the Simons Foundation through its Travel Support for Mathematicians program and by U.S.\ taxpayers through NSF grant 2451581.} \and Adam Teets\thanks{Hamilton High School, Chandler, AZ 85248. Email: {\tt adamteets127@gmail.com}} \and Zoe Wellner\footnotemark[1]${\ }^{,}$
}
\date{}

\newtheorem{theorem}{Theorem}[section]

\newtheorem{lemma}[theorem]{Lemma}
\newtheorem{proposition}[theorem]{Proposition}
\newtheorem{problem}[theorem]{Problem}

\theoremstyle{definition}
\newtheorem{definition}[theorem]{Definition}

\theoremstyle{remark}
\newtheorem*{remark}{Remark}

\newcommand{\computer}{\tikz[scale=0.28, baseline=0.1ex, transform shape]{\draw[rounded corners=0.02] (0.15, 0.45) rectangle (0.85, 0.85) {};\draw[rounded corners=0.02] (0, 0.3) rectangle (1, 1) {};\draw[rounded corners=0.02] (0, 0) rectangle (1, 0.2) {};\draw (0.2,0.1) -- (0.3,0.1);\draw (0.5,0.1) -- (0.8,0.1);}}
\newenvironment{cproof}[1][Proof]{\begin{proof}[#1]}{\end{proof}}

\DeclareMathOperator{\rank}{rank}
\DeclarePairedDelimiter\abs{\lvert}{\rvert}
\DeclarePairedDelimiter\floor{\lfloor}{\rfloor}

\newcommand{\dset}[2]{\left\{{#1}\colon{#2}\right\}}
\newcommand{\sset}[1]{\left\{{#1}\right\}}

\newcommand{\C}{\mathcal{C}}
\newcommand{\G}{\mathcal{G}}
\newcommand{\N}{\mathbb{N}}
\newcommand{\R}{\mathbb{R}}

\newcommand{\rtwo}{\alpha^*}
\newcommand{\artwo}{\arccos(\rtwo)}
\newcommand{\cl}[1]{\langle #1 \rangle}
\newcommand{\gen}[1]{\langle #1 \rangle_\pm}

\newcommand{\cherry}{\tikz[baseline=-0.15ex]{\draw[darkgray, thick] (0,0)--(0.1,0.17320508075)--(0.2,0); \draw[darkgray, fill=white] (0,0) circle (0.05); \draw[darkgray, fill=white] (0.1,0.17320508075) circle (0.05); \draw[darkgray, fill=white] (0.2,0) circle (0.05);}}
\newcommand{\single}{\tikz[baseline=-0.5ex]{\draw[darkgray, thick] (0,0)--(0.2,0); \draw[darkgray, fill=white] (0,0) circle (0.05); \draw[darkgray, fill=white] (0.2,0) circle (0.05);}}
\newcommand{\defxy}[1]{\foreach \u/\x/\y in {#1} {\coordinate (\u) at (\x,\y);}}
\newcommand{\nodesvertex}[1]{\foreach \u in {#1} {\node[vertex] at (\u) {};}}
\newcommand{\drawedges}[1]{\foreach \u/\v in {#1} {\draw (\u) -- (\v);}}

\begin{document}

\maketitle

\begin{abstract}
	We determine the maximum number $N_\alpha(d)$ of equiangular lines with fixed angle $\arccos\alpha$ for $\alpha = 1/(1+2\sqrt2)$ in $d$-dimensional Euclidean space: $2,3,4,6,8,10,14,15,16,17,18,20,22$ for $d \in \{2,\dots,14\}$, and $\max(24, \lfloor 3(d-1)/2 \rfloor)$ for $d \ge 15$. This appears to be the first complete determination of $N_\alpha(d)$ in all dimensions $d$ for a fixed nontrivial $\alpha$, since the work of Lemmens and Seidel for $\alpha = 1/3$ in 1973.
\end{abstract}

\section{Introduction} \label{sec:intro}

A set of lines passing through the origin in $d$-dimensional Euclidean space $\R^d$ is called \emph{equiangular} if any two of them form the same angle. The problem of determining the maximum cardinality $N(d)$ of an equiangular line system in $\R^d$, formally posed by van Lint and Seidel \cite{vLS66} in 1966, is considered to be one of the foundational problems of algebraic graph theory. The exact value of $N(d)$ has been determined for only finitely many $d$. After decades of research, the state of the art for $d \le 43$ is summarized as follows.

\begin{center}
    \setlength{\tabcolsep}{4pt}
    \begin{tabular}{ccccccccccccccccc}
        $d$ & 2 & 3--4 & 5 & 6 & 7--14 & 15 & 16 & 17 & 18 & 19 & 20 & 21 & 22 & 23--41 & 42 & 43\\
        \hline
        $N(d)$ & 3 & 6 & 10 & 16 & 28 & 36 & 40 & 48 & 57--59 & 72--74 & 90--94 & 126 & 176 & 276 & 276--288 & 344
    \end{tabular}
\end{center}

This table has a long history. In 1948, Haantjes \cite{H48} determined $N(3) = 6$ and $N(4) = 6$ --- the optimal configuration in $\R^3$ is given by the $6$ diagonals of a regular icosahedron. Later, van Lint and Seidel \cite{vLS66} determined $N(d)$ for $d \in \sset{5,6,7}$. Lemmens and Seidel \cite{LS73} determined $N(d)$ for $d \in \sset{15,21,22,23}$, and they mentioned in their work Asche's and Taylor's construction implying that $N(19) \ge 72$ and $N(20) \ge 90$. Progress has been stalled for more than forty years. Greaves, Koolen, Munemasa, and Sz\"{o}ll\H{o}si \cite{GKMS16} improved two long-standing upper bounds on $N(14)$ and $N(16)$, and Barg and Yu \cite{BY14} determined $N(d) = 276$ for every $d \in \sset{24, \dots, 41}$ and $N(43) = 344$, while leaving open the problem of $N(42)$, with $276 \le N(42) \le 288$. Research on $N(d)$ has remained active in recent years: Greaves and Yatsyna \cite{GY19} improved the upper bound $N(17) \le 49$; Greaves, Syatriadi and Yatsyna in two subsequent works \cite{GSY21,GSY23} determined $N(14) = 28$, $N(16) = 40$, and $N(17) = 48$, and they also improved the bounds to $N(18) \ge 57$, $N(19) \le 74$ and $N(20) \le 94$; and most recently, Greaves and Syatriadi \cite{GS24} improved the upper bound $N(18) \le 59$.

The question of determining the maximum number $N_\alpha(d)$ of equiangular lines in $\R^d$ with common angle $\arccos\alpha$ was first raised by Lemmens and Seidel \cite{LS73} in 1973, who determined that $N_{1/3}(d)$ equals $2,4,6,10,16$ for $d \in \sset{2,\dots,6}$, and $\max(28, 2(d-1))$ for $d \ge 7$. They also conjectured that $N_{1/5}(d) = \max(276, \floor{3(d-1)/2})$ for $d \ge 23$, which was recently resolved by Cao, Koolen, Lin and Yu \cite{CKLY22}. When $2 \le d \le 22$, here are the known results.

\begin{center}
    \setlength{\tabcolsep}{4pt}
    \begin{tabular}{ccccccccccccccccccccccc}
        $d$ & 2 & 3 & 4 & 5 & 6 & 7 & 8 & 9 & 10 & 11 & 12 & 13 & 14 & 15 & 16 & 17 & 18 & 19 & 20 & 21 & 22 \\
        \hline
        $N_{1/5}(d)$ & 2 & 3 & 4 & 6 & 7 & 9 & 10 & 12 & 16 & 18 & 20 & 26 & 28 & 36 & 40 & 48 & 57--59 & 72--74 & 90--94 & 126 & 176
    \end{tabular}
\end{center}

In the above table, Greaves, Koolen, Munemasa, and Sz\"{o}ll\H{o}si \cite{GKMS16} determined $N_{1/5}(d)$ for $d \in \sset{2,\dots, 11} \cup \sset{13}$, and Sz\"{o}ll\H{o}si and \"{O}sterg\aa rd \cite{SO18} determined it for $d = 12$. It is not a coincidence that the known results on $N(d)$ and $N_{1/5}(d)$ agree for $d \in \sset{14, \dots, 41}$ --- it follows from \cite[Theorem 3.7]{LS73} asserting that $N(d) \le \max(N_{1/3}(d), N_{1/5}(d), \floor{48d/(49-d)})$ for $d \le 48$.

Recent work \cite{B16,BDKS18,JP20} culminated in a solution \cite{JTYZZ21} of Jiang, Tidor, Yao, Zhang and Zhao to the problem of determining $N_\alpha(d)$ in high dimensions. In particular, they proved for every integer $k \ge 2$ and 
$d$ sufficiently large that
\[
    N_{\rtwo}(d) = \floor{3(d-1)/2} \text{ and }N_{1/(2k-1)}(d) = \floor{kd/(k-1)}.
\]
Here and throughout, we fix the constant
\[
    \rtwo = \frac{1}{1 + 2\sqrt2}.
\]
We refer the reader to the excellent survey \cite{Z24} of Zhao for the precise statement of the relevant result of \cite{JTYZZ21} and the connection to graph eigenvalue multiplicity.

Given that $\rtwo$ sits between $1/3$ and $1/5$, one might hope to determine $N_{\rtwo}(d)$ for all dimensions $d$ before tackling $N_{1/5}(d)$. Indeed, we resolve this problem.

\begin{theorem} \label{thm:main}
    The maximum number of equiangular lines with fixed angle $\artwo$ in $d$-dimensional Euclidean space satisfies
    \[
        N_{\rtwo}(d) = \begin{cases}
            2,3,4,6,8,10,14,15,16,17,18,20,22 & \text{for }d \in \sset{2,\dots, 14}, \\
            \max(24, \floor{3(d-1)/2}) & \text{for }d \ge 15.
        \end{cases}
    \]
\end{theorem}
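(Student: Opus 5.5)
The plan is to reduce the problem to a spectral statement about signed graphs and then handle small and large dimensions by different tools.

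\textbf{Reduction to signed graphs.} Lines with common angle $\arccos\rtwo$ in $\R^d$ correspond to signed graphs $G^\pm$ on $n$ vertices. The Gram matrix is $I + \rtwo S$, where $S$ is the signed adjacency matrix, and it must be positive semidefinite of rank at most $d$. Equivalently, the smallest eigenvalue of $S$ is at least $-(1+2\sqrt2)$, and
\[
n - \operatorname{mult}(-(1+2\sqrt2), S) \le d .
\]
Since $-(1+2\sqrt2)$ is irrational with conjugate $-(1-2\sqrt2)$, its multiplicity equals that of the conjugate, because the characteristic polynomial has integer coefficients. This gives $\operatorname{mult}\le n/2$ and hence $n\le 2d$ immediately.

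\textbf{Lower bounds.} I would exhibit explicit signed graphs for each listed value. For $d\ge 15$, the $\floor{3(d-1)/2}$ construction comes from \cite{JTYZZ21}: a disjoint union of copies of the ``cherry'' $\cherry$, whose smallest eigenvalue relates to $-(1+2\sqrt2)$ after switching. The sporadic value $24$ should come from a signed graph on $24$ vertices with eigenvalue $-(1+2\sqrt2)$ of multiplicity $10$, so that it fits in $\R^{14}$. The small values can be found by computer search.

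\textbf{Upper bounds.} The key structural tool is the classification of signed graphs with smallest eigenvalue $\ge -(1+2\sqrt2)$ whose multiplicity is large. I would work with a switching class and consider the positive-edge graph (the \emph{$\rtwo$-valid} structure). The argument proceeds in three steps:

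1. Bound the maximum degree of a suitable switching representative via sublinear multiplicity arguments from \cite{JTYZZ21}. Because we need all $d$, this must be made effective rather than asymptotic.

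2. Decompose the graph into connected components. Since dimension is subadditive over orthogonal pieces, the bound $n\le \max(24,\floor{3(d-1)/2})$ follows if each component $C$ satisfies $\abs{C}\le \tfrac32 \dim(C)$, with finitely many exceptional components having a better ratio. The ``$-1$'' in the formula comes from a shared direction, namely the all-ones-type vector. So I expect to show that a bounded-degree graph whose eigenvalue $1+2\sqrt2$ in $\lambda_1$ position is forced has each component contribute at most $3/2$ per dimension.

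3. Finitely enumerate, by computer, all connected signed graphs with smallest eigenvalue $\ge -(1+2\sqrt2)$ up to the size where the ratio could beat $3/2$. Lemmens--Seidel style star-complement / forbidden-subgraph arguments bound their size, since the relevant eigenvalue is in the non-integral class, and star complement technique bounds $n$ in terms of codimension.

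\textbf{Main obstacle.} I expect the hardest step to be the effective bound in the intermediate range of $d$, roughly $15\le d\le 50$, together with the exact small values for $d\le 14$. Here the asymptotic argument does not apply and the computer search space is large. My first attempt would be a star-complement enumeration: fix a star complement $H$ of order $t=d$ for the eigenvalue $-(1+2\sqrt2)$, enumerate compatible extension vectors $\{0,\pm1\}^t$ satisfying the star-complement equations, and compute maximum cliques in the compatibility graph. Pruning would use switching equivalence and the conjugate-multiplicity constraint.
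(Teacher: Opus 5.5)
\textbf{There is a genuine gap: no upper bound that holds for every $d$.} Your plan never gives an argument that is uniform in $d$.

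The sublinear-multiplicity argument of \cite{JTYZZ21} is inherently asymptotic. Even made explicit, its threshold lies far beyond anything a dimension-by-dimension search can reach. Star-complement enumeration is per-dimension, and on its own gives only a bound quadratic in $d$. So the infinitely many dimensions between $15$ and that threshold are left uncovered.

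Your step 3 also has no finiteness guarantee. ``Connected'' depends on the chosen switching representative, and there are infinitely many switching classes with smallest Seidel eigenvalue at least $-1/\rtwo$ (e.g.\ disjoint unions of cherries). So ``enumerate up to the size where the ratio could beat $3/2$'' is not a finite task unless you already know that exceptional configurations have bounded order.

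That is the missing idea, and it is a dichotomy: every graph with smallest Seidel eigenvalue at least $-1/\rtwo$ either lies in $\gen\cherry$ or has at most $28$ vertices. You gesture at forbidden subgraphs; the paper uses them as follows:
\begin{enumerate}[label=(\alph*)]
\item The eigenvalue condition is inherited by induced subgraphs (by interlacing) and is invariant under switching.
\item Every graph outside $\gen\cherry$ contains a minimal forbidden subgraph for $\gen\cherry$.
\item These forbidden subgraphs have at most $8$ vertices, because a graph in $\cl\cherry$ on at least $7$ vertices admits no nontrivial switching that stays in $\cl\cherry$. Up to switching there are only $5$ of them.
\item Growing them one vertex at a time under the exact test $S(H)+I/\rtwo\succeq 0$, the search dies at $29$ vertices.
\end{enumerate}
Combined with the closure case, this one finite computation gives every value for $d\le 14$ and the sporadic value $24$. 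This includes the upper bounds for $d\le 14$, which your ``computer search'' only addresses from below.

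\textbf{Step 2 fails as written.} Lines in different ``components'' still meet at inner product $\pm\rtwo$, so there is no orthogonal splitting. Subadditivity of rank bounds the dimension from above, which is the wrong direction for bounding $n$. What you need is a lower bound on $\rank(S(G)+I/\rtwo)$.

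For $G\in\cl\cherry$ the paper writes $S(G)+I/\rtwo=J+2(\sqrt2 I-A(G))$ with both summands positive semidefinite. Hence the kernel is $\ker J\cap\ker(\sqrt2 I-A(G))$. Perron--Frobenius then gives $n-c\le\rank(S(G)+I/\rtwo)-1$, where $c\le n/3$ is the number of cherries. This yields $\max(d,\floor{3(d-1)/2})$ for all $d$ at once.

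That superadditivity needs $\sqrt2 I-A(G)\succeq 0$, i.e.\ $G$ already in $\cl\cherry$. It breaks exactly when an exceptional piece is present. This is why the exceptional case must be bounded in \emph{total} order, not treated as ``finitely many exceptional components'' that could coexist with arbitrarily many cherries.

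\textbf{A slip in your lower bounds.} The $24$-line system has rank $15$ (eigenvalue multiplicity $9$), not rank $14$. Twenty-four lines in $\R^{14}$ would contradict $N_{\rtwo}(14)=22$ in the statement itself.
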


We prove the main theorem in the strongest possible sense --- we classify equiangular line systems with fixed angle $\artwo$ in $\R^d$, for all $d$, up to orthogonal transformations. To state our classification theorem, we recall the connection between equiangular lines, graphs and their Seidel matrices. The \emph{Seidel matrix} of a graph $G$, denoted $S(G)$, is defined as $J - I - 2A(G)$, where $A(G)$ is the adjacency matrix of $G$, and the \emph{smallest Seidel eigenvalue} of $G$ is the smallest eigenvalue of $S(G)$.

It was observed by van Lint and Seidel~\cite{vLS66} that classifying equiangular line systems with fixed angle $\arccos\alpha$ up to orthogonal transformations is equivalent to classifying graphs with smallest Seidel eigenvalue at least $-1/\alpha$ up to \emph{switching equivalence} (cf. \cite[Chapter~11]{GR01}).

\begin{definition}[Switching equivalence]
    Given a graph $G$ and a subset $U \subseteq V(G)$, the \emph{switching} of $G$ with respect to $U$, denoted $G^U$, is the graph obtained from $G$ by toggling adjacency between every vertex in $U$ and every vertex in $V(G) \setminus U$. Two graphs $G$ and $H$ are \emph{switching equivalent} if there exists $U \subseteq V(G)$ such that $H$ is isomorphic to $G^U$.
\end{definition}

To describe most of the graphs with smallest Seidel eigenvalue at least $-1/\rtwo$, we introduce the following notion. All subgraphs are induced throughout this paper.

\begin{definition}[Switching closure]
    Given a family $\G$ of graphs, the \emph{switching closure}, denoted $\gen{\G}$, is the smallest class of graphs containing every member of $\G$ that is closed under taking subgraphs, vertex-disjoint union, and switching equivalence.
\end{definition}

In \cref{sec:idea}, we note the simple fact that every graph in $\gen\cherry$ has smallest Seidel eigenvalue at least $-1/\rtwo$, where $\cherry$ denotes the cherry graph (the path of length $3$) throughout. We enumerate all other graphs with smallest Seidel eigenvalue at least $-1/\rtwo$, and summarize the results as follows.

\begin{theorem} \label{thm:stat}
    Up to switching equivalence, there are a total of 63673 graphs outside $\gen\cherry$ with smallest Seidel eigenvalue at least $-1/\rtwo$ with the following statistics, where the rank refers to the rank of $S(G) + I / \rtwo$ for a graph $G$.
    \begin{flushleft}
        \begin{tabular}{cccccccccccc}
            $d$ & 5 & 6 & 7 & 8 & 9 & 10 & 11 & 12 & 13 & 14\\
            \hline
            \# & 1 & 6 & 25 & 82 & 294 & 1047 & 3533 & 8929 & 14921 & 15799\\
            \hline
            minimum rank & 5 & 6 & 6 & 6 & 7 & 7 & 8 & 8 & 8 & 8
        \end{tabular}
    \end{flushleft}
    \begin{flushright}
        \begin{tabular}{cccccccccccccc}
            15 & 16 & 17 & 18 & 19 & 20 & 21 & 22 & 23 & 24 & 25 & 26 & 27 & 28\\
            \hline
            10555 & 5030 & 2034 & 831 & 325 & 140 & 63 & 31 & 13 & 6 & 3 & 2 & 2 & 1\\
            \hline
            9 & 10 & 11 & 12 & 13 & 13 & 14 & 14 & 15 & 15 & 25 & 26 & 27 & 28
        \end{tabular}
    \end{flushright}
    Moreover, up to switching equivalence, the graph achieving the minimum rank is unique for $n \in \sset{5,7,8,13,14,22,24,28}$, and when $n \in \sset{5,8,14,28}$, the unique graph is switching equivalent to the cycle graph $C_5$, the cycle graph $C_8$, the Heawood graph (see \cref{fig:heawood}) and the complement of the line graph of the complete graph $K_8$, respectively.
\end{theorem}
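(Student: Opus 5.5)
This is a computational enumeration, so the plan is to reduce it to a finite, exhaustive search that can be pruned. The search is organized by switching classes of graphs $G$ with $\lambda_{\min}(S(G)) \ge -1/\rtwo$, i.e.\ $S(G)+I/\rtwo \succeq 0$. Note that $-1/\rtwo = -(1+2\sqrt2)\approx -3.83$.

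\textbf{Step 1: hereditary structure.} Both properties in play are hereditary. The eigenvalue condition passes to induced subgraphs by Cauchy interlacing. Being outside $\gen\cherry$ is inherited upward: if a subgraph lies outside $\gen\cherry$, so does $G$, because $\gen\cherry$ is closed under subgraphs. The key consequence is that every graph outside $\gen\cherry$ with $n$ vertices contains a vertex whose deletion leaves a graph with the eigenvalue property on $n-1$ vertices. That smaller graph is either again outside $\gen\cherry$, or it lies inside $\gen\cherry$ and $G$ is a "minimal forbidden" extension. I would therefore first characterize $\gen\cherry$ explicitly, up to switching, as the graphs switching equivalent to disjoint unions of cherries' subgraphs, i.e.\ to a disjoint union of paths of length at most $2$ together with isolated vertices. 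I would then enumerate the minimal graphs not in $\gen\cherry$ that satisfy the eigenvalue bound. These should be small; the table says the smallest has $5$ vertices and is $C_5$.

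\textbf{Step 2: canonical augmentation.} Starting from the minimal forbidden graphs, grow the family one vertex at a time. At each step, add a vertex with every possible neighbourhood; because we work up to switching, we may switch so the new vertex is isolated and only the neighbourhood pattern among existing vertices matters modulo complement. Keep the extension only if $\lambda_{\min}\ge -1/\rtwo$. Then reduce by switching isomorphism, for example by canonically labelling the associated two-graph or the graph obtained by switching a fixed vertex isolated and taking the canonical form over all choices (nauty). Along the way, also extend graphs inside $\gen\cherry$ when the extension leaves $\gen\cherry$; these extensions are exactly what the minimal-forbidden list captures.

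\textbf{Step 3: termination, the main obstacle.} I need to show the process terminates, with nothing beyond $28$ vertices. I would use the rank bound $n \le \binom{r+1}{2}$-type relative bounds together with the observation that the eigenvalue $-1/\rtwo$ is irrational with conjugate $-(1-2\sqrt2)$. If $-1/\rtwo$ is an eigenvalue, its algebraic conjugate must be an eigenvalue with equal multiplicity, which constrains the rank: roughly, $n - \mathrm{rank} \le n/2$. Combined with the absolute bound for equiangular lines in dimension equal to the rank, this bounds $n$. Computationally, termination is then observed directly: at $n=28$ the only survivor is the complement of $L(K_8)$, which admits no extension, so the count at $29$ is zero. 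I expect that exhaustive search through the peak around $14$–$15$ vertices, which holds about 16k classes, is the real cost. Pruning must rely on interlacing, rejecting an extension as soon as a small certificate subgraph fails.

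\textbf{Step 4: read off the statistics.} Finally, for each class compute the rank of $S(G)+I/\rtwo$. This can be done exactly by working in $\mathbb{Q}(\sqrt2)$, or equivalently via the rational matrix $(S+I)^2 - 8\cdot$ appropriately combined. From these I would tabulate the counts and minimum ranks, and identify the unique minimizers by comparing canonical forms with $C_5$, $C_8$, the Heawood graph and $\overline{L(K_8)}$.
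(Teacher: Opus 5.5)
There is a genuine gap. Your overall pipeline is the paper's: seed with the minimal forbidden subgraphs for $\gen\cherry$ that satisfy the eigenvalue bound, grow one vertex at a time with interlacing-based pruning, reduce modulo switching, and stop at an empty level. What is missing is the one non-computational ingredient that turns this computation into a proof: a bound on the order of minimal forbidden subgraphs for $\gen\cherry$. Saying they ``should be small'', by appeal to the very table you are proving, does not supply it.

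Without such a bound, the empty level $29$ does not imply that levels $30,31,\dots$ are empty. A graph all of whose one-vertex deletions lie in $\gen\cherry$ is itself minimal forbidden, and it is never produced by extending the previous level. Your suggestion to also extend members of $\gen\cherry$ at each level makes every single level a finite computation, but the search never halts, since $\gen\cherry$ has members of every order. If you instead seed only with whatever small minimal forbidden graphs you happen to find, then completeness of the seed list, and hence of every count in the table, is unproved.

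Your Step 3 cannot close this gap. Equal multiplicities of $-1-2\sqrt2$ and its conjugate $-1+2\sqrt2$ give $r\ge n/2$, while the absolute bound gives $n\le\binom{r+1}{2}$; these are compatible for every $n$. More fundamentally, no argument that ignores the hypothesis $G\notin\gen\cherry$ can bound $n$, because every graph in $\cl\cherry$ satisfies $S(G)+I/\rtwo\succeq 0$, in every order. Your Step 1 dichotomy is also misstated: $G-v\in\gen\cherry$ for one $v$ does not make $G$ minimal forbidden. The correct dichotomy is that either $G$ is minimal forbidden, or $G-v\notin\gen\cherry$ for some $v$.

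The paper obtains the bound from a rigidity statement: if $G\in\cl\cherry$ has at least $7$ vertices, then no nontrivial switching of $G$ lies in $\cl\cherry$. To see this, take $\abs{U}\le n/2$ and $W=V(G)\setminus U$. After switching, each $u\in U$ is adjacent to at least $\abs{W}-2$ vertices of $W$. Maximum degree $2$ then forces $\abs{W}=4$, with each $u$ the centre of a cherry whose leaves lie in $W$, so $n\le 6$.

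Now take a minimal forbidden $G$ on $n\ge 9$ vertices, and switch so that $G-v\in\cl\cherry$ and $\deg v\le n/2$. Applying rigidity to $G-\sset{u,v}$ shows that each $G-u$ is already in $\cl\cherry$; the only other option, switching $v$ alone, would give $v$ degree at least $3$. So $G$ would be a minimal forbidden subgraph for $\cl\cherry$, and those have at most $4$ vertices. Hence minimal forbidden subgraphs for $\gen\cherry$ have at most $8$ vertices. Brute force over one-vertex extensions of members of $\cl\cherry$ on at most $7$ vertices then yields the five graphs, four of which satisfy the eigenvalue bound, all on at most $7$ vertices.

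Only then is the search both complete and terminating. For $n\ge 8$, every graph on $n$ vertices outside $\gen\cherry$ satisfying the bound contains a minimal forbidden subgraph, which by interlacing is switching equivalent to one of the four seeds. Deleting a vertex outside that seed gives such a graph on $n-1$ vertices, so an empty level $29$ certifies that all higher levels are empty. The rest of your plan is a legitimate variant of the paper's implementation: canonical forms in place of the paper's third-Seidel-degree hashing, and exact arithmetic in $\mathbb{Q}(\sqrt2)$ for semidefiniteness and rank.
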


\begin{figure}
    \centering
    \begin{tikzpicture}[scale=1.5, thick]
        \foreach \i in {0,...,13} {
            \node[vertex] (\i) at ({360/14 * \i}:1) {};
        }
        \foreach \i in {0,...,13} {
            \pgfmathtruncatemacro{\next}{Mod(\i + 1, 14)}
            \draw (\i) -- (\next);
        }
        \foreach \i in {0,...,13} {
            \pgfmathtruncatemacro{\iseven}{Mod(\i, 2)}
            \ifnum\iseven=0
                \pgfmathtruncatemacro{\target}{Mod(\i + 5, 14)}
            \else
                \pgfmathtruncatemacro{\target}{Mod(\i - 5, 14)}
            \fi
            \draw (\i) -- (\target);
        }
    \end{tikzpicture}
    \caption{Heawood graph.}
    \label{fig:heawood}
\end{figure}
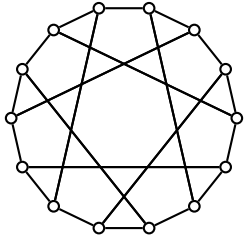

The rest of the paper is organized as follows. In \cref{sec:idea}, we prove the main theorem assuming \cref{thm:stat}. In \cref{sec:forb}, we identify five graphs such that every graph outside $\gen\cherry$ contains at least one of them as a subgraph. In \cref{sec:enum}, we prove \cref{thm:stat} by extending these five graphs. We conclude with a discussion of equiangular lines with fixed angle $\arccos(1/3)$ and an open problem in \cref{sec:open}.

Parts of our proofs are computer-assisted, using validated arithmetic. We describe each such proof in detail so that it can be independently reproduced. To help readers identify computer-assisted proofs, we use a bespoke symbol \computer\ at the conclusion of each such proof. All our code, written in C++, is available as ancillary files in the arXiv version of this paper. We deliberately avoid third-party libraries, making it straightforward to adapt the code to other programming languages.

\section{Proof of main theorem} \label{sec:idea}

To establish the main theorem \cref{thm:main} assuming \cref{thm:stat}, we first formally state the graph-theoretic characterization of equiangular line systems.

\begin{theorem}[van Lint and Seidel \cite{vLS66}] \label{thm:correspondence}
    There is a one-to-one correspondence between systems of $n$ equiangular lines with fixed angle $\alpha$ in $\R^d$ modulo orthogonal transformations and graphs of order $n$ with smallest Seidel eigenvalue at least $-1/\alpha$ and the multiplicity of $-1/\alpha$ as a Seidel eigenvalue at least $n - d$ modulo switching equivalence. \qed
\end{theorem}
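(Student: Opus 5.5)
The plan is the standard Gram-matrix argument; the only care needed is in matching the two sides' equivalence relations. Set $\lambda = 1/\alpha$.

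\emph{From lines to graphs.} Given $n$ equiangular lines in $\R^d$ with angle $\arccos\alpha$, choose a unit vector $v_i$ spanning the $i$th line. The Gram matrix is $M = (\langle v_i, v_j\rangle)$, with $M_{ii}=1$ and $M_{ij} = \pm\alpha$ for $i\neq j$. Write $M = I + \alpha S$, where $S$ is symmetric with zero diagonal and $\pm1$ off-diagonal entries. Then $S = S(G)$ for the graph $G$ in which $ij$ is an edge exactly when $M_{ij} = -\alpha$. Since $M$ is positive semidefinite of rank at most $d$, the matrix $S + \lambda I = M/\alpha$ is positive semidefinite of rank at most $d$. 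Thus the smallest Seidel eigenvalue is at least $-\lambda$, and $-\lambda$ has multiplicity $n - \rank(S+\lambda I) \ge n-d$.

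The only choice made was the sign of each $v_i$. Replacing $v_i$ by $-v_i$ for $i \in U$ conjugates $S$ by the diagonal $\pm1$ matrix $D_U$, and $D_U S(G) D_U = S(G^U)$. Relabeling the lines permutes rows and columns and so gives an isomorphic graph. An orthogonal transformation leaves $M$ unchanged. Hence the map from line systems modulo orthogonal transformations to graphs modulo switching equivalence is well defined.

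\emph{From graphs to lines, and injectivity.} Conversely, let $G$ have $S(G) + \lambda I \succeq 0$ with nullity at least $n-d$. Then $M = I + \alpha S(G)$ is positive semidefinite of rank $r \le d$, so $M = V^\top V$ for some $V \in \R^{r\times n} \subseteq \R^{d\times n}$. The columns of $V$ are unit vectors whose pairwise inner products are $\pm\alpha$. No two columns are parallel, because $\alpha<1$. They therefore span $n$ distinct equiangular lines. Any two such factorizations differ by an orthogonal map, since Gram matrices determine vector configurations up to orthogonal transformation. Switching $G$ and replacing it by an isomorphic graph only negates and relabels the vectors, so this inverse map is also well defined.

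The two constructions are mutually inverse: the first sends lines to a Gram matrix, the second recovers the lines from it. This gives the bijection.

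The one subtle step is the equivalence relations. The graph side is taken modulo switching together with isomorphism, because that is how the definition of switching equivalence is phrased. On the line side, this corresponds to sets of lines, which are unordered, up to orthogonal maps. Sign choices correspond exactly to the switchings $G^U$, and labelings correspond exactly to isomorphisms.
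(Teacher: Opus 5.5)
Your proof is correct and is essentially the standard argument. The paper does not prove this statement; it cites van Lint and Seidel with \qed. Your argument is the classical proof of that result: $I+\alpha S(G)$ is the Gram matrix of unit vectors spanning the lines, sign changes act as $S(G^U)=D(U)S(G)D(U)$ (the same identity the paper uses in proving \cref{lem:seidel-deg}), and relabelings act as isomorphisms.
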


Under the correspondence, there are two kinds of equiangular line systems: those that arise from graphs in $\gen\cherry$ and those from graphs not in $\gen\cherry$. The next proposition deals with the former case, whereas \cref{thm:stat} deals with the latter case. To this end, we need the following notion.

\begin{definition}[Closure]
    Given a family $\G$ of graphs, the \emph{closure}, denoted $\cl\G$, of $\G$ is the smallest class of graphs containing $\G$ that is closed under taking subgraphs and vertex-disjoint union.
\end{definition}

\begin{proposition} \label{lem:cherry-closure}
    Every graph in $\gen\cherry$ has smallest Seidel eigenvalue at least $-1/\rtwo$. Moreover, for every $d \in \N^+$, the maximum order of a graph $G$ in $\gen\cherry$, for which the rank of $S(G) + I/\rtwo$ is at most $d$, is exactly $\max(d, \floor{3(d-1)/2})$.
\end{proposition}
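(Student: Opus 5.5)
The plan is to show that $\gen\cherry$ is exactly the class $\C$ of graphs that are switching equivalent to a disjoint union of copies of $K_1$, $K_2$ and $\cherry$. Given this, both statements reduce to an explicit computation. Write $\lambda = 1/\rtwo = 1+2\sqrt2$, and note that $(\lambda-1)/2 = \sqrt2$. Two observations drive everything.

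First, for a graph $G$ with components $G_1,\dots,G_k$ we have
\[
    S(G) + \lambda I = \operatorname{diag}\bigl(M(G_1),\dots,M(G_k)\bigr) + J, \qquad M(H) := (\lambda-1)I - 2A(H) = 2\bigl(\sqrt2\, I - A(H)\bigr).
\]
This holds because the Seidel entries between different components are all $+1$, and that is exactly what $J$ contributes.

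Second, $M(H)$ is positive semidefinite iff the spectral radius of $A(H)$ is at most $\sqrt2$. The connected graphs with this property are $K_1$, $K_2$ and $\cherry$. Indeed, any other connected graph contains $K_{1,3}$ (radius $\sqrt3$), $P_4$ (radius $\approx1.618$) or $K_3$ (radius $2$) as a subgraph, and by Perron--Frobenius the radius is monotone under induced subgraphs.

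For the inclusion $\C \subseteq \gen\cherry$ nothing is needed: disjoint unions of subgraphs of $\cherry$ lie in the closure, and the class is closed under switching. For the eigenvalue statement, take $G\in\C$. Since switching preserves the Seidel spectrum, we may assume $G$ itself is a disjoint union of copies of $K_1$, $K_2$ and $\cherry$. Then $S(G)+\lambda I$ is a sum of a PSD block-diagonal matrix and $J$, hence PSD.

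The reverse inclusion $\gen\cherry\subseteq\C$ is the step I expect to be the main obstacle. $\C$ is visibly closed under switching. It is also closed under subgraphs, because an induced subgraph of $G^U$ is $(G[W])^{U\cap W}$. Closure under disjoint union is the real issue. If $G^U$ and $H^W$ are given with $G,H$ nice, switching $G^U\sqcup H^W$ by $U\cup W$ produces $G\sqcup H$ plus complete bipartite joins between $U$ and $V(H)\setminus W$ and between $W$ and $V(G)\setminus U$. So the union is not obviously nice.

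I would first try to replace the representative cleverly, since a nice graph admits many switchings. For example, I would replace $H^W$ by its switching $H^{V(H)\setminus W}$, or move whole components of $G$ or $H$ into $U$ or $W$, so that one of the two join sets becomes empty. If that fails, I would fall back on a spectral/structural characterization. The idea is to describe $\C$ as the graphs $G$ admitting a switching $G^X$ whose components all have adjacency spectral radius at most $\sqrt2$. One would then show, via the PSD decomposition above, that a disjoint union of two graphs with Seidel spectrum bounded by $-\lambda$ and this structure again admits such a switching. This may require a small case analysis on how $U$ and $W$ meet the components.

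For the rank statement, let $G\in\gen\cherry$ be, after switching, a disjoint union of $a$ cherries, $b$ copies of $K_2$ and $c$ copies of $K_1$, so that $n=3a+2b+c$. We have $M(K_1)=(2\sqrt2)$, and $M(K_2)$ is invertible with eigenvalues $2\sqrt2\pm2>0$. $M(\cherry)$ has rank $2$, with kernel spanned by $(1,-\sqrt2,1)$. Hence $M=\operatorname{diag}(M(G_i))$ has rank $n-a$. Its kernel is spanned by the vectors $(1,-\sqrt2,1)$ sitting on the cherries, each having nonzero inner product with $\mathbf 1$. For PSD $M$, adding $J=\mathbf 1\mathbf 1^{\top}$ raises the rank by one iff $\mathbf 1$ is not in the column space of $M$, that is, iff $\mathbf 1$ is not orthogonal to $\ker M$. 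Therefore the rank of $S(G)+\lambda I$ is $n$ if $a=0$ and $n-a+1$ if $a\ge1$.

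Given rank at most $d$, the case $a=0$ yields $n\le d$. In the case $a\ge1$ we have $a\le n/3$, so $n-n/3+1\le d$, i.e.\ $n\le 3(d-1)/2$. Equality is attained by $d$ isolated vertices, respectively by $\floor{(d-1)/2}$ cherries together with one extra $K_1$ when $d$ is even. This gives exactly $\max(d,\floor{3(d-1)/2})$.
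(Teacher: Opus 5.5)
Your rank computation is correct, but the step you flag as the main obstacle cannot be carried out, because it is false. Your class $\C$ is not closed under disjoint union. Indeed $K_3\in\C$ (it is the switching of $K_2\sqcup K_1$ with respect to the isolated vertex), and $\cherry\in\C$. Now take $G=K_3\sqcup\cherry$, and let $x$ equal $2+\sqrt2$ on each triangle vertex and $-3(1,\sqrt2,1)$ on the cherry, with the entry $-3\sqrt2$ on the centre. Then $\mathbf 1^\top x=0$. The triangle part of $x$ is a Perron vector for eigenvalue $2$, and the cherry part lies in $\ker(\sqrt2 I-A(\cherry))$. Hence
\[
x^\top\bigl(S(G)+I/\rtwo\bigr)x = 2\,x^\top\bigl(\sqrt2 I-A(G)\bigr)x = 2\cdot 3(2+\sqrt2)^2(\sqrt2-2) = -12(2+\sqrt2) < 0 .
\]
Your own PSD argument shows that every member of $\C$ has $S+I/\rtwo\succeq 0$, so $G\notin\C$. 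Your two repair strategies both amount to proving closure under disjoint union, so neither can work. The fallback fails on this very example: both pieces have smallest Seidel eigenvalue at least $-1/\rtwo$ and each is switching equivalent to a member of $\cl\cherry$.

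Consequently, if $\gen\cherry$ is read literally as closed under disjoint union, then $K_3\sqcup\cherry\in\gen\cherry$, and the first sentence of the proposition is false. The paper itself bears this out. Four of the graphs in \cref{fig:min-forb} ($P_4\sqcup K_1$, $K_3\sqcup K_2$, $K_{1,3}\sqcup K_2$, $K_{1,3}\sqcup 3K_1$) are disjoint unions of members of $\C$, yet they are listed as lying outside $\gen\cherry$. Moreover, \cref{sec:enum} notes that $K_3\sqcup K_2$ has smallest Seidel eigenvalue below $-1/\rtwo$.

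The paper's proof opens with ``since switching preserves Seidel eigenvalues, we may assume $G\in\cl\cherry$''. That is, both the proof and the enumeration in \cref{sec:forb} work with $\gen\cherry$ equal to exactly your $\C$: the graphs switching equivalent to a member of $\cl\cherry$. This class is closed under subgraphs and switching, but not under disjoint union. Under that reading the reverse inclusion holds by definition, and you should drop it and point out the definitional mismatch rather than try to prove it.

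The remainder of your argument is correct and is essentially the paper's proof:
\begin{itemize}
\item the same identity $S(G)+I/\rtwo=J+2(\sqrt2 I-A(G))$;
\item the same key point that $\ker(\sqrt2 I-A(G))$ is not orthogonal to $\mathbf 1$ (the paper uses a nonnegative Perron vector; you use explicit cherry vectors and get the exact rank, $n$ if $a=0$ and $n-a+1$ if $a\ge 1$);
\item the same extremal graphs.
\end{itemize}
One slip: the kernel of $M(\cherry)$ is spanned by $(1,\sqrt2,1)$, not $(1,-\sqrt2,1)$. This is harmless, since the coordinate sum $2+\sqrt2$ is still nonzero.
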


The proof closely follows the proof of Theorem 1.2 and Proposition 3.2 in \cite{JTYZZ21}.

\begin{proof}
    Let $G$ be a graph in $\gen\cherry$. Since switching equivalence preserves the eigenvalues of Seidel matrices, without loss of generality, we may assume that $G$ is in $\cl\cherry$, that is, each connected component of $G$ is an isolated vertex, a single edge, or a cherry. Since the largest eigenvalue of $\cherry$ is $\sqrt{2}$, every connected component of $G$ has largest eigenvalue at most $\sqrt{2}$, and so does $G$ itself. Thus
    \begin{equation} \label{eqn:cherry-seidel}
        S(G) + I/\rtwo = J - I - 2A(G) + I/\rtwo = J + 2\mleft(\sqrt{2}I - A(G)\mright),
    \end{equation}
    which is positive semidefinite. Therefore, the smallest Seidel eigenvalue of $G$ is at least $-1/\rtwo$.
    
    Now, suppose further that the rank of $S(G) + I/\rtwo$ is at most $d$. Let $c$ be the number of cherries in $G$.

    If $c = 0$, then the largest eigenvalue of $G$ is at most $1$, and so according to \eqref{eqn:cherry-seidel}, the matrix $S(G) + I/\rtwo$ is positive definite, which implies that the rank of $S(G) + I/\rtwo$ is exactly $d$.

    Finally, we consider the case where $c > 0$. Since both $J$ and $\sqrt2 I - A(G)$ are positive semidefinite,
    \[
        \ker\mleft(J + 2\mleft(\sqrt2 I - A(G)\mright)\mright) = \ker(J) \cap \ker\mleft(\sqrt2 I - A(G)\mright).
    \]
    By the Perron--Frobenius theorem, there is an eigenvector of $A(G)$ associated with $\sqrt2$ that has nonnegative entries. This vector lies in $\ker(\sqrt2 I - A(G))$, but not in $\ker(J)$, implying that $\dim\ker\mleft(J + 2\mleft(\sqrt2 I - A(G)\mright)\mright) \le \dim\ker(\sqrt2 I - A(G)) - 1$. By the rank-nullity theorem, we obtain
    \[
        n - c = \rank(\sqrt2 I - A(G)) \le \rank(J + 2\mleft(\sqrt2 I - A(G)\mright)) \le d - 1.
    \]
    Since $c \le n/3$, we have $n \le \floor{3(d-1)/2}$. Moreover, when $G$ is the vertex-disjoint union of $\floor{(d-1)/2}$ cherries along with $(d-1) - 2\floor{(d-1)/2}$ isolated vertices, the order of $G$ is exactly $\floor{3(d-1)/2}$, and the rank of $\sqrt2 I - A(G)$ is exactly $d-1$, which implies that the rank of $S(G) + I/\rtwo$ is at most $d$ via \eqref{eqn:cherry-seidel}.
\end{proof}

\begin{proof}[Proof of \cref{thm:main} assuming \cref{thm:stat}]
    By \cref{thm:correspondence}, for each dimension $d$, we seek the largest order $N_{\rtwo}(d)$ of a graph $G$ with smallest Seidel eigenvalue at least $-1/\rtwo$ such that the rank of $S(G) + I/\rtwo$ is at most $d$. 
    
    If $G$ is a graph in $\gen\cherry$, then by \cref{lem:cherry-closure}, the largest order of $G$ is exactly $\max(d, \floor{3(d-1)/2})$.

    Suppose that $G$ is not in $\gen\cherry$. According to \cref{thm:stat}, the largest order of such a graph $G$ with rank of $S(G) + I/\rtwo$ at most $d$ is listed as follows.

    \begin{center}
        \begin{tabular}{ccccccccccccccccc}
            $d$ & 5 & 6 & 7 & 8 & 9 & 10 & 11 & 12 & 13 & 14 & 15--24 & 25 & 26 & 27 & 28--$\infty$\\
            \hline
            maximum order & 5 & 8 & 10 & 14 & 15 & 16 & 17 & 18 & 20 & 22 & 24 & 25 & 26 & 27 & 28
        \end{tabular}
    \end{center}

    Taking the maximum of the two cases, we obtain the values of $N_{\rtwo}(d)$ for all $d \in \N^+$.
\end{proof}

\section{Minimal forbidden subgraphs} \label{sec:forb}

To identify graphs such that every graph outside $\gen\cherry$ contains at least one of them as a subgraph, we take the most economical approach.

\begin{definition}[Minimal forbidden subgraph]
    Given a class $\C$ of graphs that is closed under taking subgraphs, a graph $F$ is a \emph{minimal forbidden subgraph} for $\C$ if $F$ itself is not in $\C$, but every proper subgraph of $F$ is in $\C$.
\end{definition}

To enumerate the minimal forbidden subgraphs for $\gen\cherry$, we show that a large graph in $\gen\cherry$ is uniquely represented by a graph in $\cl\cherry$ under switching equivalence, and then characterize the minimal forbidden subgraphs for $\cl\cherry$.

\begin{proposition} \label{lem:uniq}
    For every $G \in \cl\cherry$ of order at least $7$, and every $U \subseteq V(G)$, if the switching $G^U$ of $G$ with respect to $U$ is also in $\cl\cherry$, then $U$ is trivial, that is, $U = \varnothing$ or $U = V(G)$.
\end{proposition}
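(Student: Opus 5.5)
The proof is a degree-counting argument that uses only the fact that every graph in $\cl\cherry$ has maximum degree at most $2$, together with the structure of its degree-$2$ vertices. Suppose $G \in \cl\cherry$ has order $n \ge 7$, and let $U \subseteq V(G)$ be nontrivial with $G^U \in \cl\cherry$. Write $W = V(G) \setminus U$, so both $U$ and $W$ are nonempty. Recall that in a graph of $\cl\cherry$ every component is an isolated vertex, a single edge, or a cherry. Hence every vertex has degree at most $2$, and every vertex of degree $2$ is the center of a cherry component whose two leaves have no other neighbors.

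\textbf{Step 1: each side has at most four vertices.} Fix $u \in U$ and write $e(u,X)$ for the number of $G$-neighbors of $u$ in $X$. Switching toggles exactly the pairs between $U$ and $W$, so
\[
    \deg_{G^U}(u) = \bigl(\abs{W} - e(u,W)\bigr) + e(u,U).
\]
Since $G^U \in \cl\cherry$, this degree is at most $2$, which gives
\[
    \abs{W} \le 2 + e(u,W) - e(u,U).
\]
Since $\deg_G(u) = e(u,W) + e(u,U) \le 2$, it follows that $\abs{W} \le 4$. Moreover, $\abs{W} = 4$ forces $e(u,W) = 2$ and $e(u,U) = 0$. The situation is symmetric, because $G^U = G^W$: applying the same count to any $w \in W$ gives $\abs{U} \le 4$, with the analogous equality conditions.

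\textbf{Step 2: the extremal case is impossible.} Since $\abs{U} + \abs{W} = n \ge 7$ and each side has at most $4$ vertices, one side has exactly $4$ vertices and the other has at least $3$. By symmetry, take $\abs{W} = 4$ and $\abs{U} \ge 3$. The equality analysis of Step 1 then says that every $u \in U$ has exactly two $G$-neighbors, both lying in $W$. So every $u \in U$ is the center of a cherry component of $G$ whose two leaves lie in $W$. Distinct centers belong to distinct components, so their leaf sets are disjoint. This requires at least $2\abs{U} \ge 6$ vertices in $W$, contradicting $\abs{W} = 4$. Hence $U$ must be trivial.

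The only delicate point is the equality case in Step 1, namely extracting $e(u,W) = 2$ and $e(u,U) = 0$ from $\abs{W} = 4$. Once that is in hand, the leaf-counting in Step 2 is immediate. The hypothesis $n \ge 7$ is used only to force one side to have exactly four vertices and the other at least three. This is consistent with small exceptions such as $C_5$ or unions of short paths, which switch between different members of $\cl\cherry$.
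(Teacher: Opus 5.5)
Your proof is correct and is essentially the paper's argument: counting the degree of $u \in U$ in $G^U$ forces $\abs{W} \le 4$, with equality only when $u$ is a cherry center whose two leaves lie in $W$, and disjointness of these cherries then gives $2\abs{U} \le 4$. The paper obtains $\abs{W} \ge 4$ up front by assuming $\abs{U} \le n/2$, whereas you bound both sides by $4$ and use $G^U = G^W$; this is a cosmetic difference.

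One correction to your closing aside: $C_5$ is not such an exception. It is not in $\cl\cherry$, and it is switching equivalent to the minimal forbidden subgraph consisting of a path on four vertices plus an isolated vertex. Genuine small exceptions are a cherry plus two isolated vertices, or two disjoint cherries, switched at the cherry centers.
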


\begin{proof}
    Suppose that $G$ is a graph in $\cl\cherry$ of order $n \ge 7$, and $U \subseteq V(G)$ such that $G^U$ is also in $\cl\cherry$. We may assume that $\abs{U} \le n/2$ by taking the complement of $U$ if necessary. Assume for the sake of contradiction that $U \neq \varnothing$. Set $W = V(G) \setminus U$. Clearly $\abs{W} \ge 4$. For every vertex $u \in U$, it has at least $\abs{W \setminus N_G(u)}$ neighbors inside $W$ in $G^U$, where $N_G(u)$ denotes the set of neighbors of $u$ in $G$. Since $G^U \in \cl\cherry$, its maximum degree is at most $2$, and so $\abs{W \setminus N_G(u)} \le 2$. It must be the case that $\abs{W} = 4$, $\abs{N_G(u)} = 2$, and $N_G(u) \subseteq W$ for every $u \in U$. Since $G[\sset{u} \cup N_G(u)]$ has to be a connected component of $G$ that is isomorphic to $\cherry$ for every $u \in U$, we have $2\abs{U} \le \abs{W} = 4$, which contradicts $n \ge 7$.
\end{proof}

\begin{proposition} \label{lem:min-forb}
    Any minimal forbidden subgraph for $\cl\cherry$ has at most $4$ vertices.
\end{proposition}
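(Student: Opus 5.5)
We characterize $\cl\cherry$ directly. Since $\cherry$ is the path on three vertices, its subgraphs are the isolated vertex, the single edge $\single$, the two-vertex edgeless graph, and $\cherry$ itself. Hence a graph $G$ lies in $\cl\cherry$ if and only if every connected component of $G$ has at most $3$ vertices. Indeed, every connected graph on at most $3$ vertices is $K_1$, $K_2$, $\cherry$, or the triangle $K_3$. Since $K_3$ is not a subgraph of $\cherry$, the precise condition is that every component is $K_1$, $K_2$, or $\cherry$. Equivalently, $G$ has no component with at least $4$ vertices and no triangle component.

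Now let $F$ be a minimal forbidden subgraph for $\cl\cherry$. By the characterization, either $F$ contains a triangle component or $F$ has a component $C$ with at least $4$ vertices.

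In the first case, the triangle is an induced subgraph of $F$ that is not in $\cl\cherry$. Minimality, together with closure of $\cl\cherry$ under subgraphs, forces $F=K_3$, which has $3$ vertices.

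In the second case, $C$ is connected with at least $4$ vertices, so it contains a connected induced subgraph on exactly $4$ vertices. To see this, grow a connected vertex set one vertex at a time, starting from a single vertex and adding a neighbor each time; the set of the first $4$ vertices induces a connected subgraph. That induced subgraph is a connected graph on $4$ vertices, so it is not in $\cl\cherry$. By minimality, $F$ equals it, and so $F$ has exactly $4$ vertices.

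In both cases $|V(F)|\le 4$, as claimed. There is no real obstacle here. The only point needing care is the exact description of $\cl\cherry$: one must note that $K_3$ is excluded even though it has only $3$ vertices, because the closure only allows subgraphs of $\cherry$ and disjoint unions.
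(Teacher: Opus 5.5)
Your proof is correct, but it runs in the opposite direction from the paper's.

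The paper argues top-down, in three steps:
\begin{enumerate}
\item Because $\cl\cherry$ is closed under vertex-disjoint union, a minimal forbidden subgraph $G$ must be connected.
\item Delete a non-cut vertex $v$, for instance a leaf of a spanning tree. This leaves a connected proper subgraph $G-v\in\cl\cherry$, which must therefore be an induced subgraph of $\cherry$.
\item Hence $|V(G)|\le 3+1$.
\end{enumerate}

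You argue bottom-up. You first make the description of $\cl\cherry$ explicit: every component is $K_1$, $K_2$ or $\cherry$. You then locate a small obstruction inside $F$, either a triangle component or a connected $4$-vertex induced subgraph grown inside a large component. Minimality then forces $F$ to equal that obstruction.

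Both proofs rest on the same two facts. Connected members of $\cl\cherry$ have at most $3$ vertices, and a connected graph can shed or acquire vertices one at a time while staying connected. The paper's version is shorter and handles $K_3$ uniformly, since $K_3-v=K_2$. Yours needs the triangle as a separate case, but in exchange it tells you more: every minimal forbidden subgraph is $K_3$ or a connected graph on exactly $4$ vertices. A one-line check then pins them down as $K_3$, $P_4$, $K_{1,3}$ and $C_4$, because every other connected $4$-vertex graph contains an induced triangle.

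One wording fix: your initial claim that $G\in\cl\cherry$ if and only if every component has at most $3$ vertices is false because of $K_3$. You correct it two sentences later, so just delete the incorrect version.
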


\begin{proof}
    Suppose that $G$ is a minimal forbidden subgraph for $\cl\cherry$. Since $G \notin \cl\cherry$ but every proper subgraph of $G$ is in $\cl\cherry$, it must be the case that $G$ is connected. There exists a vertex $v$ of $G$ such that $G - v$ is still connected. Since $G-v$ is a subgraph of $\cherry$, the graph $G$ has at most $4$ vertices.
\end{proof}

We bound the order of minimal forbidden subgraphs for $\gen\cherry$ in preparation for their enumeration.

\begin{lemma} \label{lem:min-forb-switching}
    Any minimal forbidden subgraph for $\gen\cherry$ has at most $8$ vertices.
\end{lemma}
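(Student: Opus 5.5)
Let $F$ be a minimal forbidden subgraph for $\gen\cherry$ and suppose, for contradiction, that $F$ has $n \ge 9$ vertices. The plan is to show that the cherry-closure representatives of the vertex-deleted subgraphs $F - v$ must glue together consistently, which forces $F$ itself into $\gen\cherry$. The two tools are \cref{lem:uniq} (uniqueness of the switching on graphs of order at least $7$) and \cref{lem:min-forb} (forbidden subgraphs for $\cl\cherry$ have at most $4$ vertices).

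\textbf{Step 1: a switching for each vertex-deleted subgraph.} For each vertex $v$, the graph $F - v$ has order $n-1 \ge 8$ and lies in $\gen\cherry$. Every graph in $\gen\cherry$ is switching equivalent to a graph in $\cl\cherry$: switching commutes with taking subgraphs, and the disjoint union of switchings is a switching of the disjoint union. Hence there is a set $U_v \subseteq V(F) \setminus \sset{v}$ with $(F-v)^{U_v} \in \cl\cherry$. By \cref{lem:uniq}, $U_v$ is unique up to complementation within $V(F)\setminus\sset{v}$.

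\textbf{Step 2: the switchings are compatible.} Take distinct vertices $v, w$. Restricting to $V(F) \setminus \sset{v,w}$, both $U_v$ and $U_w$ switch $F - v - w$ (of order $n-2 \ge 7$) into $\cl\cherry$, since $\cl\cherry$ is closed under subgraphs. Applying \cref{lem:uniq} to the $\cl\cherry$ graph $(F-v-w)^{U_v}$, we get that $U_v \setminus \sset{w}$ and $U_w \setminus \sset{v}$ agree up to complement. After fixing complements, we want a single set $U \subseteq V(F)$ with $U \setminus \sset{v} = U_v$ for every $v$.

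\textbf{Step 3: contradiction via \cref{lem:min-forb}.} Since $F \notin \gen\cherry$, the graph $H = F^U$ is not in $\cl\cherry$. So $H$ contains a minimal forbidden subgraph $X$ for $\cl\cherry$, which has at most $4$ vertices by \cref{lem:min-forb}. Because $n \ge 9 > 4$, we can pick a vertex $v \notin V(X)$. Then $X$ is a subgraph of $H - v = (F-v)^{U_v} \in \cl\cherry$, a contradiction. Hence $n \le 8$.

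\textbf{Main obstacle: the gluing in Step 2.} Pairwise consistency up to complement does not immediately give a global $U$. I would fix a vertex $v_0$ and set $U$ to be $U_{v_0}$, with $v_0$ included or not. To decide membership of $v_0$, I would use a second vertex $w$: choose the orientation of $U_w$ that matches $U_{v_0}$ on $V\setminus\sset{v_0,w}$, and let $U_w$ decide whether $v_0 \in U$. Because the overlap has at least $7$ vertices, the complement ambiguity is a genuine two-way choice and is resolved consistently.

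The choice of $w$ must not matter. For two choices $w, w'$, apply \cref{lem:uniq} to $F - w - w'$, which has $n - 2 \ge 7$ vertices, and compare $U_w$ with $U_{w'}$ on a set containing $v_0$. The orientations are pinned down by agreement with $U_{v_0}$ on $V\setminus\sset{v_0,w,w'}$, a set of order $n - 3 \ge 6$. That set is nonempty and proper, so the orientation is still determined, and the membership of $v_0$ is forced to agree.

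Once $U$ is well defined, $U\setminus\sset{v}$ agrees with $U_v$ up to complement for every $v$, by the same overlap argument. This is all Step 3 needs, since complementing $U_v$ does not change $(F-v)^{U_v}$.

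The bound $n \ge 9$ enters exactly through the requirement $n - 2 \ge 7$ in \cref{lem:uniq}.
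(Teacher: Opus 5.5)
Your proof is correct, but you handle the gluing differently from the paper. The paper avoids any global gluing by normalizing once. It fixes a vertex $v$ and switches $F$ so that $F-v\in\cl\cherry$. It then also switches at $\sset{v}$ if needed, which leaves $F-v$ unchanged, so that $\deg v\le n/2$.

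For $u\neq v$, the switching set for $F-u$, chosen to avoid $v$, is trivial on $V(F)\setminus\sset{u,v}$ by \cref{lem:uniq}. So it is either empty or amounts to switching at $\sset{v}$ alone. The latter is ruled out because it would give $v$ degree more than $2$ in $(F-u)^{\sset{v}}$, while $\cl\cherry$ has maximum degree $2$. Hence every $F-u$ already lies in $\cl\cherry$, and $F$ is a minimal forbidden subgraph for $\cl\cherry$ on at least $9$ vertices, contradicting \cref{lem:min-forb}.

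You resolve the same binary ambiguity (whether $v_0$ must be switched) by a second application of \cref{lem:uniq}, to $F-w-w'$. This shows the choice cannot depend on the auxiliary vertex $w$; for that step only nonemptiness of $V\setminus\sset{v_0,w,w'}$ matters, not properness.

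The paper's route is shorter but relies on a structural feature of $\cl\cherry$, namely bounded degree. Yours is a purely formal cocycle argument using only uniqueness of switchings on large graphs and closure under subgraphs. It would therefore transfer unchanged to any class with an analogue of \cref{lem:uniq}. In both arguments the threshold $n\ge 9$ comes from applying \cref{lem:uniq} to graphs on $n-2$ vertices.
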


\begin{proof}
    Let $G$ be a minimal forbidden subgraph for $\gen\cherry$ on $n$ vertices. Assume for the sake of contradiction that $n \ge 9$. Fix a vertex $v$ of $G$. Minimality means $G - v$ is in $\gen\cherry$. By applying a switching to $G$, we may assume that $G - v$ is in $\cl\cherry$, and that $v$ has degree at most $n/2$.
    
    We claim that $G - u$ is in $\cl\cherry$ for every $u \in V(G)$. Take an arbitrary $u \in V(G)$. We may assume that $u \neq v$. Since $G - u \in \gen\cherry$, there exists $U \subseteq V(G)\setminus \sset{u}$ such that $(G-u)^U \in \cl\cherry$. We may assume that $v \not\in U$ by taking the complement of $U$ as a subset of $V(G) \setminus \sset{u}$ in case $v \in U$. Since $G - v \in \cl\cherry$, clearly $G - \sset{u,v}$ is in $\cl\cherry$. Since $(G - \sset{u,v})^{U}$ is also in $\cl\cherry$, by \cref{lem:uniq}, we have that $U$ is trivial as a subset of $V(G) \setminus \sset{u,v}$. Suppose for a moment that $U = V(G) \setminus \sset{u,v}$. Since the degree of $v$ in $(G - u)^{\sset{v}}$ is at least $n / 2 - 1$, the graph $(G - u)^{\sset{v}} \not\in \cl\cherry$, which contradicts the fact that $(G - u)^{\sset{v}} = (G - u)^U \in \cl\cherry$. Therefore $U = \varnothing$, and $G - u = (G - u)^U \in \cl\cherry$.
    
    Based on the claim, we observe that $G$ is a minimal forbidden subgraph for $\cl\cherry$ on at least $9$ vertices, which contradicts \cref{lem:min-forb}.
\end{proof}

Our enumeration of minimal forbidden subgraphs for $\gen\cherry$ is a straightforward brute force approach.

\begin{lemma}
    Up to switching equivalence, there are $5$ minimal forbidden subgraphs for $\gen\cherry$ listed in \cref{fig:min-forb}.
\end{lemma}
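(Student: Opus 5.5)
By \cref{lem:min-forb-switching}, every minimal forbidden subgraph for $\gen\cherry$ has at most $8$ vertices. The enumeration is therefore a finite computation over all graphs on at most $8$ vertices, organized by switching classes. I would proceed by order $n = 1, 2, \dots, 8$.

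For each $n$, I would list one representative of each switching class of graphs on $n$ vertices. This can be done by fixing a vertex and switching it to be isolated, which reduces to graphs on $n-1$ vertices plus an isolated vertex, and then reducing further by canonical forms. Membership of a class in $\gen\cherry$ is then decided as follows:
\begin{itemize}
    \item Compute the Seidel switching class.
    \item Test whether some member $G^U$, over all $2^{n-1}$ subsets $U$ not containing a fixed vertex, lies in $\cl\cherry$.
    \item Membership in $\cl\cherry$ is the easy local check that every component is $K_1$, $K_2$ or a path on three vertices, equivalently that the graph has maximum degree at most $2$, has no triangle, and has no component with more than $3$ vertices.
\end{itemize}
Since $n \le 8$, this is at most $128$ subsets per graph, which is trivial to exhaust.

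Next, a class is a minimal forbidden subgraph exactly when it is not in $\gen\cherry$ but every vertex-deletion is in $\gen\cherry$. This suffices because $\gen\cherry$ is closed under subgraphs, so it is enough to check deletions of single vertices. Switching equivalence commutes with vertex deletion: $(G^U)-v = (G-v)^{U\setminus\{v\}}$. Hence the property depends only on the switching class, and the count ``up to switching equivalence'' is well defined. For the single-vertex deletions I would reuse the membership table computed at order $n-1$.

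To keep the search small, I would generate order-$n$ candidates only by extending graphs on $n-1$ vertices that lie in $\gen\cherry$. By the uniqueness in \cref{lem:uniq}, for $n-1 \ge 7$ these can be taken as members of $\cl\cherry$, and their extensions are obtained by adding one vertex with an arbitrary neighbourhood. Every minimal forbidden subgraph arises this way.

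Finally, I would collect the resulting classes, reduce them to canonical representatives to remove isomorphic duplicates across switching, and check that exactly five remain, matching \cref{fig:min-forb}. The main obstacle is correct handling of isomorphism within switching classes, so that nothing is double counted or missed. I would handle this by computing, for each graph, the lexicographically minimal adjacency matrix over all $n!$ permutations and all switchings, which is at most $8! \cdot 2^7 \approx 5\times 10^6$ per graph. This is expensive but feasible, and it can be pruned by only canonizing the candidates that survive the minimality test, of which there are few. As a sanity check, I would also confirm by hand that the expected small examples, such as $C_5$ and the relevant graphs on $4$ and $5$ vertices, appear. \computer
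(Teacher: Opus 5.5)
Your plan is correct and essentially matches the paper's proof. Both bound the order by $8$ via \cref{lem:min-forb-switching}, extend graphs of $\cl\cherry$ on at most $7$ vertices by a new vertex with arbitrary neighbourhood, and keep a candidate if it lies outside $\gen\cherry$ while all its single-vertex deletions lie inside, deduplicating up to switching; the differences are only in implementation (you check all $2^{n-1}$ switchings directly and canonize by brute force, where the paper compares against a stored \texttt{base} using a hash of third Seidel degrees). One small correction: switching $G-v$ into $\cl\cherry$ works at every order simply because $G-v\in\gen\cherry$, so the appeal to the uniqueness in \cref{lem:uniq} and the restriction $n-1\ge 7$ are unnecessary.
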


\begin{figure}
    \centering
    \begin{tikzpicture}[scale=.5, thick]
        \fill[litegray, rounded corners=12pt] (-1,-1) rectangle (5,1);
        \defxy{a/0/0,b/1/0,c/2/0,d/3/0,e/4/0}
        \drawedges{a/b,b/c,c/d}
        \nodesvertex{a,b,c,d,e}
        \begin{scope}[shift={(7.8660254,0)}]
            \fill[litegray, rounded corners=12pt] (-1.8660254,-1) rectangle (3,1);
            \defxy{a/-.8660254/.5,b/-.8660254/-.5,c/0/0,d/1/0,e/2/0}
            \drawedges{a/b,b/c,c/a,d/e}
            \nodesvertex{a,b,c,d,e}
        \end{scope}
        \begin{scope}[shift={(13.8660254,0)}]
            \fill[litegray, rounded corners=12pt] (-1.8660254,-1) rectangle (4,1);
            \defxy{a/-.8660254/.5,b/-.8660254/-.5,c/0/0,d/1/0,e/2/0,f/3/0}
            \drawedges{b/c,a/c,c/d,e/f}
            \nodesvertex{a,b,c,d,e,f}
        \end{scope}
        \begin{scope}[shift={(20.8660254,0)}]
            \fill[litegray, rounded corners=12pt] (-1.8660254,-1) rectangle (2.8660254,1);
            \defxy{a/-.8660254/.5,b/-.8660254/-.5,c/0/0,d/1/0,e/1.8660254/.5,f/1.8660254/-.5}
            \drawedges{b/c,a/c,c/d,d/f,d/e}
            \nodesvertex{a,b,c,d,e,f}
        \end{scope}
        \begin{scope}[shift={(26.5980762,0)}]
            \fill[litegray, rounded corners=12pt] (-1.8660254,-1) rectangle (5,1);
            \defxy{a/-.8660254/.5,b/-.8660254/-.5,c/0/0,d/1/0,e/2/0,f/3/0,g/4/0}
            \drawedges{b/c,c/a,c/d}
            \nodesvertex{a,b,c,d,e,f,g}
        \end{scope}
    \end{tikzpicture}
    \caption{Minimal forbidden subgraphs for the switching closure of the cherry graph.}
    \label{fig:min-forb}
\end{figure}
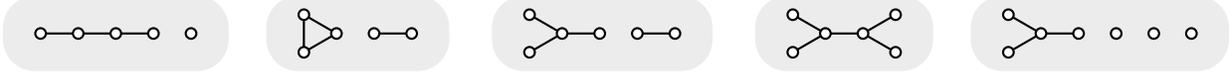

\begin{cproof}
    By \cref{lem:min-forb-switching}, the order of any minimal forbidden subgraph for $\gen\cherry$ is at most $8$. To enumerate minimal forbidden subgraphs $G$ for $\gen\cherry$ up to switching equivalence, we may assume that $G$ on $n$ vertices can be obtained from a graph in $\cl\cherry$ on $n-1$ vertices by adding a new vertex and specifying its neighbors among the $n-1$ existing vertices.
    
    Our algorithm first stores all graphs in $\cl\cherry$ on up to $8$ vertices in \texttt{base} up to switching equivalence, and then for graphs in \texttt{base} on up to $7$ vertices, we add a new vertex, exhaust all possible vertex subsets as its neighborhood, and check whether each new graph $G$ is a minimal forbidden subgraph for $\gen\cherry$ --- $G$ should not be switching equivalent to any of the graphs in \texttt{base}, but $G - v$ should be switching equivalent to some graph in \texttt{base} for every vertex $v$ of $G$. Finally, we add each such minimal forbidden subgraph to \texttt{dict}, provided it is not switching equivalent to any existing member of \texttt{dict}. Our code is available as \texttt{min\_forb.cpp}, an auxiliary file in the arXiv version of the paper.
\end{cproof}

For the rest of this section, we provide further details of our implementation.

\subsection{Hash function of graphs and Seidel degrees} \label{subsec:hash}

A common routine in the above algorithm is to decide whether a graph $G$ is switching equivalent to any graph in a given set of graphs. A brute force implementation would still run within a reasonable time limit. Nevertheless, we carry out the more efficient implementation since the code will be reused in \cref{sec:enum}.

Suppose that we need to check whether a graph $G$ is switching equivalent to any existing member of \texttt{set}. To efficiently detect switching equivalence, we maintain a hash table \texttt{hash} of existing members of \texttt{set} using the following hash function.

\begin{definition}
    Given a graph $G$, its vertex $v$, and $k \in \N$, the $k$-th Seidel degree of $v$ in $G$ is the $(v,v)$-entry of the $k$-th power of the Seidel matrix of $G$.
\end{definition}

\begin{proposition} \label{lem:seidel-deg}
    For every graph $G$, every vertex subset $U$ of $G$, every vertex $v$ of $G$, and every $k \in \N$, the $k$-th Seidel degree of $v$ in $G$ is equal to that of $v$ in $G^U$.
\end{proposition}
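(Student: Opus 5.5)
The plan is to express switching as a similarity transformation of the Seidel matrix by a diagonal signature matrix. Once that identity is established, the claim about diagonal entries of powers is immediate.

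Fix $U \subseteq V(G)$. Let $D_U$ be the diagonal matrix with $(w,w)$-entry equal to $-1$ if $w \in U$ and $+1$ otherwise. The first step is the identity
\[
    S(G^U) = D_U\, S(G)\, D_U .
\]
This is a direct entrywise check. For distinct vertices $x,y$, the $(x,y)$-entry of $S(G)$ is $-1$ if $xy$ is an edge and $+1$ otherwise. Conjugating by $D_U$ multiplies this entry by $(D_U)_{xx}(D_U)_{yy}$. That factor is $-1$ exactly when one of $x,y$ lies in $U$ and the other does not, which is precisely when switching toggles the adjacency of $x$ and $y$. The diagonal entries are $0$ in both matrices and are unchanged, since $(D_U)_{vv}^2 = 1$.

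The second step uses $D_U^2 = I$. From it we get
\[
    S(G^U)^k = D_U\, S(G)^k\, D_U \quad \text{for every } k \in \N,
\]
including $k=0$, where both sides are $I$.

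The third step reads off the $(v,v)$-entry. Because $D_U$ is diagonal, this entry equals $(D_U)_{vv}^2\,(S(G)^k)_{vv} = (S(G)^k)_{vv}$. So the $k$-th Seidel degree of $v$ is the same in $G$ and in $G^U$.

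There is no real obstacle here. The only point needing care is the sign bookkeeping in the first step, that is, checking that the sign factor is $-1$ exactly for pairs separated by the cut $(U, V(G)\setminus U)$.
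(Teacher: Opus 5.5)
Your proposal is correct and follows the paper's proof: the paper also writes $S(G^U) = D(U)S(G)D(U)$ for a diagonal $\pm1$ matrix, uses $D(U)^2 = I$ to get the same relation for the $k$-th powers, and reads off the unchanged diagonal entries. The paper uses the opposite sign convention, which does not change $DSD$, and it asserts the conjugation identity without the entrywise check that you supply.
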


\begin{proof}
    Define $D(U)$ to be the diagonal matrix on $V(G)$ by $D_{v,v} = 1$ if $v \in U$ and $D_{v,v} = -1$ otherwise. The Seidel matrices of $G$ and $G^U$ are related by $S(G^U) = D(U)S(G)D(U)$. Finally, since $D(U)^2 = I$, we have $S(G^U)^k = D(U)S(G)^kD(U)$, whose diagonal entries agree with those in $S(G)^k$.
\end{proof}

For a graph $G$ of order $n$, its hash value is the sorted sequence of third Seidel degrees. We skip first and second Seidel degrees because they are always $0$ and $n-1$, respectively. Clearly, when two graphs are switching equivalent, their hash values are equal by \cref{lem:seidel-deg}.

This allows us to test whether $G$ is switching equivalent to any existing member of \texttt{set} by examining only the members of \texttt{hash[hv]}, where \texttt{hv} is the hash value of $G$. In addition, when we search for a vertex subset $U$ of $G$ and an isomorphism between $G^U$ and a member $G'$ of \texttt{hash[hv]}, we only map a vertex of $G$ to a vertex of $G'$ that has the same third Seidel degree.

\section{Enumeration of exceptional graphs} \label{sec:enum}

In this section, we prove \cref{thm:stat} by extending the minimal forbidden subgraphs for $\gen\cherry$ in \cref{fig:min-forb} to graphs with smallest Seidel eigenvalue at least $-1/\rtwo$.

\begin{cproof}[Proof of \cref{thm:stat}]
    To enumerate the graphs $G \notin \gen\cherry$ with smallest Seidel eigenvalue at least $-1/\rtwo$ up to switching equivalence, we generate these graphs in \texttt{dict} based on their order. At the start, \texttt{dict[5]}, \texttt{dict[6]}, \texttt{dict[7]} consist of those minimal forbidden subgraphs for $\gen\cherry$ in \cref{fig:min-forb} on $5, 6, 7$ vertices whose smallest Seidel eigenvalue is at least $-1/\rtwo$, and the counter \texttt{n} is set to \texttt{5}. It turns out that the smallest Seidel eigenvalue of the second graph in \cref{fig:min-forb} is less than $-1/\rtwo$, and so we skip it.

    Whenever \texttt{dict[n]} is nonempty, we iterate through members of \texttt{dict[n]} before incrementing the counter \texttt{n} by $1$. For each $G$ in \texttt{dict[n]}, we carry out the following steps.

    \begin{enumerate}[label=(\roman*)]
        \item We compute $\rank(S(G) + I/\rtwo)$ and record the current minimum rank in \texttt{min\_rank}.
        \item We connect a new vertex to a vertex subset $U$ of $G$ in every possible way to obtain new graphs $H$. See \cref{subsec:foresight} for a more efficient implementation.
        \item We store those graphs $H$ that satisfy $S(H) + I/\rtwo \succeq 0$ in a temporary array \texttt{candidates}.
        \item We merge \texttt{candidates} into \texttt{dict[n+1]} (cf.~\cref{subsec:hash}).
    \end{enumerate}

    Our program terminates at $n = 29$ because \texttt{dict[29]} is empty. We also output the graphs achieving the minimum rank for each $n \in \sset{5, \dots, 28}$, and write all the graphs in \texttt{dict[n]} together with their rank to the file \texttt{cherry-<n>.txt}. Our code is available as the ancillary file \texttt{enum.cpp} in the arXiv version of the paper.

    Based on the output, we observe that the graph achieving the minimum rank is unique for $n \in \sset{5,7,8,13,14,22,24,28}$. When $n \in \sset{5,8,14,28}$, the unique graph is switching equivalent to the cycle graph $C_5$, the cycle graph $C_8$, the Heawood graph and the complement of the line graph of the complete graph $K_8$, respectively, whereas when $n \in \sset{7,13,22,24}$, the unique graph is not switching equivalent to any regular graph.
\end{cproof}

\begin{remark}
    Although no regular graph is switching equivalent to the unique graph (with smallest Seidel eigenvalue at least $-1/\rtwo$) achieving the minimum rank for each $n \in \sset{7, 13, 22, 24}$, one can still seek an easy-to-describe switching of these graphs. For example, when $n = 24$, the unique graph is switching equivalent to the following: take the Heawood graph in \cref{fig:heawood} with bipartition $A$ and $B$; add four vertices $c_1, c_2, c_3, c_4$, each connected to all vertices in $B$; then add six more vertices, each connected to a distinct pair of vertices in $\sset{c_1, c_2, c_3, c_4}$.
\end{remark}

\begin{remark}
    On a Mac mini equipped with an Apple M4 chip and 16 GB of memory, the program terminated in under 3 minutes.
\end{remark}

\subsection{Adding a new vertex} \label{subsec:foresight}

To accelerate the generation of graphs, we leverage the computation done for $G$ in \texttt{dict[n]} based on the following observation. Suppose that $G$ is a graph such that $S(G) + I/\rtwo \succeq 0$. For $U \subseteq V(G)$, denote by $G_U^+$ the graph obtained from $G$ by connecting a new vertex to vertices in $U$. Set
\[\mathcal{U} = \dset{U \subseteq V(G)}{ S(G_U^+) + I/\rtwo \succeq 0}.\]
For every $W \subseteq V(H)$, where $H = G_U^+$ for some $U \subseteq V(G)$, note that $S(H^+_{W}) + I/\rtwo \succeq 0$ implies that $S(G^+_{W \cap V(G)}) + I/\rtwo \succeq 0$, which is equivalent to $W \cap V(G) \in \mathcal{U}$. In other words, when we connect a new vertex to a vertex subset $W$ of $H$, we need only consider $W$ satisfying $W \cap V(G) \in \mathcal{U}$.

To keep track of the set $\mathcal{U}$ defined for $G$, we add an attribute \texttt{possible\_subsets}, a list of distinct vertex subsets of $G$, to each graph $G$. For the minimal forbidden subgraphs for $\gen\cherry$ in \cref{fig:min-forb}, its \texttt{possible\_subsets} consists of all the nonempty vertex subsets. For each $G$ in \texttt{dict[n]}, instead of connecting a new vertex, say $v$, to a vertex subset $U$ of $G$ in every possible way, we connect $v$ to $S$ for every $S$ in \texttt{possible\_subsets} of $G$. To obtain \texttt{possible\_subsets} of the graphs obtained from $G$, we initially set \texttt{new\_possible\_subsets} to be the empty list. For each $U$ in \texttt{possible\_subsets} of $G$, check whether $S(G_U^+) + I/\rtwo \succeq 0$, and if so, we append both $U$ and $U \cup \sset{v}$ to \texttt{new\_possible\_subsets}, and we append $G_U^+$ to \texttt{candidates}. Finally, we set \texttt{possible\_subsets} of each graph $G'$ in \texttt{candidates} to \texttt{new\_possible\_subsets}.

\begin{remark}
    The same idea of carrying computation forward previously appeared in \cite{AJ25}, where Acharya and Jiang classified connected graphs $G$ satisfying $A(G) + \lambda^* I \succeq 0$, where $\lambda^* \approx 2.01980$.
\end{remark}

\subsection{Positive semidefiniteness and rank}

Testing whether $S(G) + I/\rtwo \succeq 0$ amounts to verifying that all the roots of $p_G(x - 1/\rtwo)$ are non-negative, where $p_G$ is the characteristic polynomial of $S(G)$. We recall the following criterion, which is a special case of Descartes' rule of signs: for $x_1, \dots, x_n \in \R$, they are non-negative if and only if their elementary symmetric functions are all non-negative. Suppose that
\[
    p_G\mleft(x - 1/\rtwo\mright) = c_nx^n + \dots + c_0.
\]
We can conclude that $S(G) + I/\rtwo \succeq 0$ if and only if $(-1)^{n-k}c_k \ge 0$ for every $k \in \sset{0, \dots, n}$.

To compute the characteristic polynomial $p_G$ of $S(G)$, we use the Faddeev--LeVerrier algorithm (see \cite[Chapter IV Section 5]{G98}), throughout which only integers are involved. Computing the coefficients of $p_G(x - 1/\rtwo)$ can be done entirely within the integral domain $\mathbb{Z}[\sqrt2]$.

As a byproduct, the rank of $S(G) + I/\rtwo$ can be computed as $n - m$, where $m$ is the multiplicity of $0$ as a root of $p_G(x - 1/\rtwo)$, that is, $m$ is the index such that $c_m \neq 0$ but $c_i = 0$ for all $i < m$.

\begin{remark}
    One caveat in the implementation comes from the possibility that the integers involved in the Faddeev--LeVerrier algorithm as well as those in the coefficients of $p_G(x)$ and $p_G(x - 1/\rtwo)$ can be quite large theoretically. Fortunately, in practice, the integers involved in the computation never exceed $3.42 \times 10^{21}$ in absolute value, and so signed 128-bit integers are sufficient for the computation mentioned in this subsection.
\end{remark}

\section{Concluding remarks} \label{sec:open}

\subsection{Equiangular lines with fixed angle \texorpdfstring{$\arccos(1/3)$}{arccos(1/3)}}

One can adapt the whole process of this paper to equiangular lines with fixed angle $\arccos(1/3)$. Since that angle is well-understood, the consistency of our enumeration with known results gives further confidence in the implementation of the computer-assisted proofs. We apply the adaptation as follows. Notice that every graph in $\gen\single$ has smallest Seidel eigenvalue at least $-3$. To enumerate graphs with smallest Seidel eigenvalue at least $-3$ outside $\gen\single$, we list the minimal forbidden subgraphs for $\gen\single$ and then extend them to graphs with smallest Seidel eigenvalue at least $-3$. We omit the details since the procedure mirrors that of \cref{sec:forb} and \cref{sec:enum}.

\begin{lemma}
    Up to switching equivalence, there are $4$ minimal forbidden subgraphs for $\gen\single$ listed in \cref{fig:min-forb-single}. \hfill\computer
\end{lemma}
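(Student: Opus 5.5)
We follow the same strategy as for $\gen\cherry$ in \cref{sec:forb}, replacing the cherry by a single edge. The class $\cl\single$ consists of the graphs of maximum degree at most $1$, that is, disjoint unions of edges and isolated vertices.

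\textbf{Step 1: minimal forbidden subgraphs for $\cl\single$.} A minimal forbidden subgraph for $\cl\single$ is connected. Deleting a non-cut vertex leaves a connected subgraph of $\single$, so it has at most $3$ vertices. The only examples are therefore the path on three vertices and the triangle.

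\textbf{Step 2: a uniqueness statement analogous to \cref{lem:uniq}.} Suppose $G, G^U \in \cl\single$, with $n$ at least some small threshold, $U$ nontrivial, $\abs{U}\le n/2$, and $W = V(G)\setminus U$. Each $u\in U$ has at least $\abs{W}-1$ neighbours in $W$ within $G^U$. Since $G^U$ has maximum degree at most $1$, this forces $\abs{W}\le 2$, and hence $n\le 4$. So the analogue of \cref{lem:uniq} holds for every $n\ge 5$.

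\textbf{Step 3: an order bound analogous to \cref{lem:min-forb-switching}.} Take a minimal forbidden subgraph $G$ for $\gen\single$ on $n$ vertices, and switch so that $G-v\in\cl\single$ and $\deg v\le n/2$. Fix $u\neq v$ and choose $U$ with $(G-u)^U\in\cl\single$ and $v\notin U$. Step 2 applied to $G-\sset{u,v}$, which has $n-2\ge 5$ vertices, shows $U$ is trivial. If $U = V(G)\setminus\sset{u,v}$, then $v$ has degree at least $(n-2)-n/2$ in $(G-u)^{\sset{v}}$, which exceeds $1$ once $n\ge 7$. This contradiction gives $U=\varnothing$, so $G-u\in\cl\single$ for every $u$. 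Then $G$ is a minimal forbidden subgraph for $\cl\single$, so $n\le 3$, a contradiction. Hence every minimal forbidden subgraph for $\gen\single$ has at most $6$ vertices. The degree estimate in this step is the place where the exact threshold must be checked carefully.

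\textbf{Step 4: computer enumeration.} We run the same brute force as before, which is the main work but is routine with the hashing of \cref{subsec:hash}. First store all graphs in $\cl\single$ on at most $6$ vertices in \texttt{base}, up to switching. Then, for each graph in \texttt{base} on at most $5$ vertices, add a new vertex with every possible neighbourhood. Keep a resulting graph $G$ if it is not switching equivalent to anything in \texttt{base} while $G-w$ is switching equivalent to a member of \texttt{base} for every $w$. Finally, deduplicate the kept graphs up to switching equivalence.

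We expect exactly $4$ classes. For example, the path $P_3$ plus an isolated vertex is switching equivalent to the triangle plus an isolated vertex, and $P_3$ itself switches to $K_1\cup K_2$, so which small graphs lie in $\gen\single$ is subtle. This is why we rely on the program rather than a hand case analysis, and we check that its output matches \cref{fig:min-forb-single}.
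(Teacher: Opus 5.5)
Your proposal is correct and follows the paper's route exactly. The paper only says the procedure mirrors \cref{sec:forb}, and your analogues are the right ones with the right thresholds: uniqueness of the $\cl\single$ representative for order at least $5$, minimal forbidden subgraphs for $\cl\single$ having at most $3$ vertices, the resulting order bound of $6$ (your degree estimate $n/2-2>1$ for $n\ge 7$ checks out), and the same brute-force search over one-vertex extensions of \texttt{base}.

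One side remark in your last paragraph is false, though nothing depends on it. Switching preserves the parity of the number of edges inside every triple of vertices. Hence $P_3$ switches to $3K_1$; it is $K_3$ that switches to $K_1\cup K_2$. Likewise $P_3\cup K_1$ switches to $K_2\cup 2K_1$, not to $K_3\cup K_1$, which instead switches to $2K_2$.
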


\begin{figure}
    \centering
    \begin{tikzpicture}[scale=.5, thick]
        \fill[litegray, rounded corners=12pt] (-1,-1) rectangle (5,1);
        \defxy{a/0/0,b/1/0,c/2/0,d/3/0,e/4/0}
        \drawedges{a/b,b/c}
        \nodesvertex{a,b,c,d,e}
        \begin{scope}[shift={(7,0)}]
            \fill[litegray, rounded corners=12pt] (-1,-1) rectangle (5,1);
            \defxy{a/0/0,b/1/0,c/2/0,d/3/0,e/4/0}
            \drawedges{a/b,b/c,d/e}
            \nodesvertex{a,b,c,d,e}
        \end{scope}
        \begin{scope}[shift={(14,0)}]
            \fill[litegray, rounded corners=12pt] (-1,-1) rectangle (5,1);
            \defxy{a/0/0,b/1/0,c/2/0,d/3/0,e/4/0}
            \drawedges{a/b,b/c,c/d}
            \nodesvertex{a,b,c,d,e}
        \end{scope}
        \begin{scope}[shift={(21.8660254,0)}]
            \fill[litegray, rounded corners=12pt] (-1.8660254,-1) rectangle (3,1);
            \defxy{a/-.8660254/.5,b/-.8660254/-.5,c/0/0,d/1/0,e/2/0}
            \drawedges{a/b,b/c,c/a,d/e}
            \nodesvertex{a,b,c,d,e}
        \end{scope}
    \end{tikzpicture}
    \caption{Minimal forbidden subgraphs for the switching closure of the single-edge graph.}
    \label{fig:min-forb-single}
\end{figure}

We obtain the following classification of equiangular line systems with fixed angle $\arccos{1/3}$ up to orthogonal transformations.

\begin{theorem} \label{thm:main-1}
    Up to switching equivalence, there are a total of 902 graphs outside $\gen\single$ with smallest Seidel eigenvalue at least $-3$ with the following statistics, where the rank refers to the rank of $S(G) + 3I$ for a graph $G$.

    \begin{flushleft}
        \begin{tabular}{ccccccccccccc}
            $n$ & 5 & 6 & 7 & 8 & 9 & 10 & 11 & 12 & 13 & 14 & 15 & 16\\
            \hline
            \# & 2 & 5 & 12 & 20 & 35 & 52 & 69 & 89 & 101 & 103 & 101 & 90\\
            \hline
            minimum rank & 5 & 5 & 5 & 5 & 5 & 5 & 6 & 6 & 6 & 6 & 6 & 6
        \end{tabular}
    \end{flushleft}
    \begin{flushright}
        \begin{tabular}{cccccccccccc}
            17 & 18 & 19 & 20 & 21 & 22 & 23 & 24 & 25 & 26 & 27 & 28\\
            \hline
            70 & 54 & 37 & 23 & 16 & 10 & 5 & 3 & 2 & 1 & 1 & 1\\
            \hline
            7 & 7 & 7 & 7 & 7 & 7 & 7 & 7 & 7 & 7 & 7 & 7
        \end{tabular}
    \end{flushright}
    Moreover, up to switching equivalence, the graph achieving the minimum rank is unique for $n \in \sset{8,9,10,14,15,16,26,27,28}$, and when $n \in \sset{9,10,15,16,27,28}$, the unique graph is switching equivalent to $L(K_{3,3})$, the Petersen graph, $L(K_6)$, the Clebsch graph, the complement of the Schl\"afli graph, and the complement of $L(K_8)$, respectively. \hfill\computer
\end{theorem}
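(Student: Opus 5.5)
We will mirror the computer-assisted pipeline of \cref{sec:forb} and \cref{sec:enum}, replacing the cherry by the single edge $\single$ and the threshold $-1/\rtwo$ by $-3$.

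\emph{Step 1: every graph in $\gen\single$ has smallest Seidel eigenvalue at least $-3$.} Up to switching, a graph in $\gen\single$ lies in $\cl\single$, i.e.\ it is a matching plus isolated vertices. Its adjacency matrix then has largest eigenvalue at most $1$, so
\[
S(G)+3I = J + 2(I - A(G)) \succeq 0.
\]
Consequently, every graph outside $\gen\single$ with $S(G)+3I\succeq 0$ contains a minimal forbidden subgraph for $\gen\single$.

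\emph{Step 2: bound and enumerate the minimal forbidden subgraphs.} First we prove an analogue of \cref{lem:uniq}: for $G\in\cl\single$ of sufficiently large order (something like $n\ge 5$ or $6$), the only switchings keeping $G$ in $\cl\single$ are trivial. The argument is the same as before, now using maximum degree $1$: a vertex $u\in U$ has at least $\abs{W}-1$ neighbours in $W$ after switching, which forces $\abs{W}\le 2$. Next we prove an analogue of \cref{lem:min-forb}: a minimal forbidden subgraph for $\cl\single$ is connected, and deleting a non-cut vertex leaves a subgraph of $\single$, so it has at most $3$ vertices. Combining these exactly as in \cref{lem:min-forb-switching} gives a small order bound for minimal forbidden subgraphs of $\gen\single$, with the threshold on $n$ adjusted to the new constants. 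A brute-force search, reusing the Seidel-degree hashing of \cref{subsec:hash}, then yields the four graphs of \cref{fig:min-forb-single}.

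\emph{Step 3: enumerate the exceptional graphs.} Starting from those minimal forbidden graphs with $S+3I\succeq 0$, we extend one vertex at a time as in \cref{sec:enum}. At each stage we prune neighbourhoods using the \texttt{possible\_subsets} device of \cref{subsec:foresight}. Positive semidefiniteness and rank are tested exactly, through the sign pattern of the coefficients of $p_G(x-3)$; this is now pure integer arithmetic, which is easier than before. We then merge candidates up to switching equivalence and record the counts and minimum ranks. The process should terminate with \texttt{dict[29]} empty. This termination must be observed rather than derived: no order bound is proved a priori, and a natural concern is whether the search is finite at all.

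\emph{Step 4: identify the extremal graphs.} For the listed $n$, we compare the unique minimum-rank graph against $L(K_{3,3})$, the Petersen graph, $L(K_6)$, the Clebsch graph, the complement of the Schl\"afli graph, and the complement of $L(K_8)$, using an explicit switching-isomorphism test. As a consistency check, the resulting maxima (e.g.\ $28$ lines in rank $7$) should agree with Lemmens--Seidel's values of $N_{1/3}(d)$.

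\emph{Main obstacle.} The delicate part is the correctness of the order bound in Step 2. In particular, the analogue of \cref{lem:uniq} must hold at the order required by the argument of \cref{lem:min-forb-switching}: with maximum degree $1$, small cases such as $n=4$ admit nontrivial switchings, for instance $2K_2$ is switching equivalent to itself via one edge. So I would first verify by hand, or by a tiny search, the exact threshold, and then set the enumeration bound with slack.
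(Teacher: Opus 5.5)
Your proposal is correct and is essentially the paper's own argument: the paper proves \cref{thm:main-1} by rerunning the pipeline of \cref{sec:forb} and \cref{sec:enum} with $\single$ in place of $\cherry$ and $-3$ in place of $-1/\rtwo$, and termination (empty \texttt{dict[29]}) is observed rather than derived, exactly as you describe. The threshold you leave open is routine rather than delicate: your own estimate $\abs{W}\le 2$ together with $\abs{W}\ge n/2$ gives the analogue of \cref{lem:uniq} for $n\ge 5$, and the argument of \cref{lem:min-forb-switching} then bounds minimal forbidden subgraphs of $\gen\single$ by $6$ vertices. Also, $2K_2$ is switched to itself by taking one endpoint of each edge, not the vertex set of one edge, which produces $K_4$.
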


\cref{thm:main-1} is consistent with \cite[Theorem~3.1]{CKMY21} of Cao, Koolen, Munemasa and Yoshino and \cite[Corollary~1.1]{Y25} of Yoshino, which supports the correctness of our implementation.

In \cite{CKMY21}, Cao et al.\ classified, up to switching equivalence, graphs $G$ with largest Seidel eigenvalue $3$ that is maximal in the sense that there is no proper supergraph $G'$ of $G$ with largest Seidel eigenvalue $3$ such that $\rank\mleft(3I - S(G')\mright) = \rank\mleft(3I - S(G)\mright)$. The unique graphs with smallest Seidel eigenvalue $-3$ in \cref{thm:main-1} for $n \in \sset{10, 16, 28}$ are clearly maximal in the sense that there is no proper supergraph $G'$ of $G$ with smallest Seidel eigenvalue $-3$ such that $\rank\mleft(S(G') + 3I\mright) = \rank\mleft(S(G) + 3I\mright)$. Since taking the complement negates the Seidel matrix, the complements of these $3$ unique graphs are maximal in the sense of Cao et al. Indeed, the complement of the Petersen graph is $L(K_5)$; the complement of the Clebsch graph is switching equivalent to the vertex-disjoint union of $L(K_6)$ and $K_1$; the complement of the complement of $L(K_8)$ is just $L(K_8)$ --- all of these complement graphs appear in \cite[Theorem~3.1]{CKMY21}.

In \cite{Y25}, Yoshino derived the total number $s(n)$ of systems of $n$ equiangular lines with fixed angle $\arccos(1/3)$, which differs from the number of graphs in \cref{thm:main-1} by precisely $\floor{n/2} + 1$ for $n \ge 3$. The difference comes from the graphs in $\cl\single$ on $n$ vertices, which are not counted in \cref{thm:main-1}.

Our code is available as \texttt{min\_forb.cpp} and \texttt{enum.cpp}, as auxiliary files in the arXiv version of the paper. To run the code for $\alpha = 1/3$, use the additional \texttt{-{}-single-edge} flag. The output files containing the graphs in \cref{thm:main-1} together with their rank are available as \texttt{single-edge-5.txt} to \texttt{single-edge-28.txt}.

\subsection{Finiteness of exceptional graphs}

For general $\alpha \in (0,1)$, set $\lambda = (1-\alpha)/(2\alpha)$, and let $\G(\lambda)$ be the set of connected graphs with spectral radius at most $\lambda$. A general construction of graphs $G$ with smallest Seidel eigenvalue at least $-1/\alpha$ is to take the vertex-disjoint union of graphs from $\G(\lambda)$. Indeed, such a graph $G$ satisfies $\lambda I - A(G) \succeq 0$, which implies that $S(G) + I/\alpha = J + 2 (\lambda I - A(G)) \succeq 0$. Taking the switching closure yields the family $\gen{\G(\lambda)}$ whose members all have smallest Seidel eigenvalue at least $-1/\alpha$. This leads to the following qualitative question.

\begin{problem}
    For which $\alpha \in (0,1)$, are there only finitely many graphs outside $\gen{\G(\lambda)}$ with smallest Seidel eigenvalue at least $-1/\alpha$, where $\lambda = (1-\alpha)/(2\alpha)$?
\end{problem}

We note that the answer to the above problem is negative when $\alpha = 1/(4\sqrt{u^2+1}-1)$ for any integer $u \ge 10^5$. In this case, $\lambda = 2\sqrt{u^2+1}-1$ is not the spectral radius of any graph, and so for any graph $G \in \gen{\G(\lambda)}$ on $n$ vertices, the matrix $S(G) + I/\alpha$ is positive definite, and so its rank is $n$. If there were only finitely many graphs outside $\gen{\G(\lambda)}$ with smallest Seidel eigenvalue at least $-1/\alpha$, then we would conclude, by an argument similar to that in the proof of \cref{thm:main}, that $N_\alpha(d) = d$ for sufficiently large $d$. However, Schildkraut \cite[Theorem~1.4]{S23} proved that $N_\alpha(d) \ge d + \Omega(\log\log d)$.


\bibliographystyle{plain}
\bibliography{seidel}

\end{document}